\newtheorem{theorem}{Theorem}[section]
\newtheorem{lemma}[theorem]{Lemma}
\newtheorem{corollary}[theorem]{Corollary}
\theoremstyle{definition}
\newtheorem{example}[theorem]{Example}
\numberwithin{equation}{section}
\DeclareMathOperator{\Tr}{tr}
\begin{document}

\title[Some results on singular value inequalities]{Some results on singular value inequalities of compact operators in Hilbert space}

\author[A. Taghavi, V. Darvish, H. M. Nazari, S. S. Dragomir ]{A. Taghavi, V. Darvish, H. M. Nazari, S. S. Dragomir }

\address{{ Department of Mathematics, Faculty of Mathematical Sciences,
University of Mazandaran, P. O. Box 47416-1468, Babolsar, Iran.}}
\address{{Mathematics, School of Engineering and Science Victoria University, PO Box 14428, Melbourne City, MC 8001, Australia.}}
\address{{School of Computational and Applied Mathematics, University of the Witwatersrand, Private Bag 3, Johannesburg 2050, South Africa.}}
\email{taghavi@umz.ac.ir, vahid.darvish@vu.edu.au,
m.nazari@stu.umz.ac.ir, sever.dragomir@vu.edu.au}

\subjclass[2010]{15A60, 47A30, 47B05, 47B10}

\keywords{Singular value, compact operator, normal operator, unitarily invariant norm , Schatten $p$-norm.} \Large
\begin{abstract}
We prove several singular value inequalities for sum and product of compact operators in Hilbert space. Some of our results generalize the previous inequalities for operators. Also, applications of some inequalities are given.

\end{abstract}
 \maketitle
\section{\textbf{Introduction}}
Let $B(H)$ stand for the $C^{*}$-algebra of  all bounded linear
operators on a complex separable Hilbert space $H$ with inner
product $\langle \cdot,\cdot\rangle$  and let $K(H)$ denote the
two-sided ideal of compact operators in $B(H)$. For $A\in B(H)$, let
$\|A\|=\sup\{\|Ax\|:\|x\|=1\}$ denote the usual operator norm of $A$
and $|A|=(A^{*}A)^{1/2}$ be the absolute value of $A$.\\
An operator $A\in B(H)$ is positive and write $A\geq0$ if $\langle
Ax,x\rangle\geq0$ for all $x\in H$. We say $A\leq B$ whenever
$B-A\geq0$.\\
 We consider the wide class of unitarily invariant
norms $|||\cdot|||$. Each of these norms is defined on an ideal in
$B(H)$ and it will be implicitly understood that when we talk of
$|||T|||$, then the operator $T$ belongs to the norm ideal
associated with $|||\cdot|||$. Each unitarily invariant norm
$|||\cdot|||$ is characterized by the invariance property
$|||UTV|||=|||T|||$ for all operators $T$ in the norm ideal
associated with $|||\cdot|||$ and for all unitary operators $U$ and
$V$ in $B(H)$.
 For $1\leq
p<\infty$, the Schatten $p$-norm of a compact operator $A$ is
defined by $\|A\|_{p}=(\Tr |A|^{p})^{1/p}$, where $\Tr$ is the usual
trace functional. Note that for $A\in K(H)$ we have,
$\|A\|=s_{1}(A)$, and if $A$ is a Hilbert-Schmidt operator, then
$\|A\|_{2}=(\sum_{j=1}^{\infty}s_{j}^{2}(A))^{1/2}$. These norms are
special examples of the more general class of the Schatten
$p$-norms, which are unitarily invariant \cite{bhabook}.

 The direct sum $A\oplus B$ denotes the block diagonal
matrix $\left[\begin{array}{cc}
A&0\\
0&B\\
\end{array}\right]$ defined on $H\oplus H$, see \cite{aud,zhan}. It
is easy to see that
\begin{equation}\label{eq12}
\|A\oplus B\|=\max(\|A\|,\|B\|),
\end{equation}
and
\begin{equation}\label{eq13}
\|A\oplus B\|_{p}=(\|A\|_{p}^{p}+\|B\|_{p}^{p})^{1/p}.
\end{equation}
We denote the singular values of an operator $A\in K(H)$ as
$s_{1}(A)\geq s_{2}(A)\geq \ldots$ are the eigenvalues of the
positive operator $|A|=(A^{*}A)^{1/2}$ and eigenvalues of the self-adjoint operator $A$ denote as $\lambda_{1}\geq\lambda_{2}\geq \ldots$ which repeated accordingly to
multiplicity.

There is a one-to-one correspondence between symmetric gauge
functions defined on sequences of real numbers and unitarily
invariant norms defined on norm ideals of operators. More precisely,
if $|||\cdot|||$ is unitarily invariant norm, then there exists a
unique symmetric gauge function $\Phi$ such that
$$|||A|||=\Phi(s_{1}(A),s_{2}(A),\ldots),$$
for every operator $A\in K(H)$. Let $A\in K(H)$, and if $U,V\in
B(H)$ are unitarily operators, then
$$s_{j}(UAV)=s_{j}(A),$$
for $j=1,2,\ldots$ and so unitarily invariant norms satisfies the
invariance property
$$|||UAV|||=|||A|||.$$

In this paper, we obtain some inequalities for sum and product
of operators. Some of our results generalize the previous inequalities for operators. 
\section{\textbf{Some singular value inequalities for sum and product of operators}}
In this section we give inequalities for singular value of
operators. Also, some norm inequalities are obtained as an
application.\\
First we should remind the following inequalities. We apply inequalities (\ref{tao2}) and (\ref{kit2}) in our proofs.

The following inequality due to Tao \cite{tao}  asserts that if $A, B, C\in K(H)$
such that $\left[\begin{array}{cc}
A&B\\
B^{*} &C\\
\end{array}\right]\geq 0$, then
\begin{equation}\label{tao2}
2s_{j}(B)\leq s_{j}\left[\begin{array}{cc}
A&B\\
B^{*} &C\\
\end{array}\right],
\end{equation}
for $j=1,2,\ldots$.\\
Here, we give another proof for above inequality.\\
Let  $\left[\begin{array}{cc}
A&B\\
B^{*} &C\\
\end{array}\right]\geq 0$ then $\left[\begin{array}{cc}
A&-B\\
-B^{*} &C\\
\end{array}\right]\geq 0$ and have the same singular values (see\cite[Theorem 2.1]{aud}). So, we can write
$$\left[\begin{array}{cc}
0&2B\\
2B^{*} &0\\
\end{array}\right]\leq \left[\begin{array}{cc}
A&B\\
B^{*} &C\\
\end{array}\right],$$
and 
$$\left[\begin{array}{cc}
0&-2B\\
-2B^{*} &0\\
\end{array}\right]\leq \left[\begin{array}{cc}
A&-B\\
-B^{*} &C\\
\end{array}\right].$$ On the other hand, we know that for every self-adjoint compact operator $X$ we have $s_{j}(X)\leq  \lambda_{j}(X\oplus -X)$, for all $j=1,2,\ldots$. By using of this fact we obtain
\begin{eqnarray*}
s_{j}\left(\left[\begin{array}{cc}
0&2B\\
2B^{*} &0\\
\end{array}\right]\right)&=&\lambda_{j}\left(\left[\begin{array}{cc}
0&2B\\
2B^{*} &0\\
\end{array}\right]\oplus \left[\begin{array}{cc}
0&-2B\\
-2B^{*} &0\\
\end{array}\right]\right)\\
&\leq&\lambda_{j}\left(\left[\begin{array}{cc}
A&B\\
B^{*} &C\\
\end{array}\right]\oplus \left[\begin{array}{cc}
A&-B\\
-B^{*} &C\\
\end{array}\right]  \right)\\
&=& s_{j}\left(\left[\begin{array}{cc}
A&B\\
B^{*} &C\\
\end{array}\right]\oplus \left[\begin{array}{cc}
A&-B\\
-B^{*} &C\\
\end{array}\right]  \right).
\end{eqnarray*}
So, we obtain $$s_{j}\left(\left[\begin{array}{cc}
0&2B\\
2B^{*} &0\\
\end{array}\right]\right)\leq  s_{j}\left(\left[\begin{array}{cc}
A&B\\
B^{*} &C\\
\end{array}\right]\oplus \left[\begin{array}{cc}
A&-B\\
-B^{*} &C\\
\end{array}\right]  \right).$$
Equivalently, 
$$2s_{j}(B \oplus B^{*}) \leq  \left(\left[\begin{array}{cc}
A&B\\
B^{*} &C\\
\end{array}\right]\oplus \left[\begin{array}{cc}
A&-B\\
-B^{*} &C\\
\end{array}\right]  \right).$$
Since $s_{j}(B)=s_{j}(B^{*})$ and  $s_{j}\left(\left[\begin{array}{cc}
A&B\\
B^{*} &C\\
\end{array}\right]\right)=s_{j}\left(\left[\begin{array}{cc}
A&-B\\
-B^{*} &C\\
\end{array}\right]\right)$, we have
$$2s_{j}(B)\leq s_{j}\left(\left[\begin{array}{cc}
A&B\\
B^{*} &C\\
\end{array}\right]\right).$$

In \cite[Remark 2.2]{aud}, Audeh and Kittaneh  proved that for every $A,B,C\in K(H)$ such that
$\left[\begin{array}{cc}
A&B\\
B^{*} &C\\
\end{array}\right]\geq 0$, then
\begin{equation}\label{asho}
s_{j}\left(\left[\begin{array}{cc}
A&B\\
B^{*} &C\\
\end{array}\right]\right)\leq 2s_{j}(A\oplus C),
\end{equation}
for $j=1,2,\ldots$. Therefore, by inequality (\ref{tao2}) we have the following inequality
\begin{equation}\label{kit2}
s_{j}(B)\leq s_{j}(A\oplus C),
\end{equation}
for $j=1,2,\ldots$.
Since every unitarily invariant norm is a monotone function of the singular values of an
operator, we can write 
\begin{equation}\label{tbo}
\left|\left|\left|\left[\begin{array}{cc}
A&B\\
B^{*} &C\\
\end{array}\right]\right|\right|\right|\leq 2|||A\oplus C|||.
\end{equation}
We can obtain the reverse of  inequality (\ref{tbo}) for arbitrary operators $X,Y\in B(H)$ by pointing out the following inequality holds because of norm property
$$|||X+Y|||\leq |||X|||+|||Y|||.$$
Replace $X$ and $Y$ by $X-Y$ and $X+Y$, respectively. We have
$$2|||X|||\leq |||X-Y|||+|||X+Y|||,$$
for all $X,Y\in B(H)$. \\
Let $X=\left[\begin{array}{cc}
A&0\\
0 &C\\
\end{array}\right]$ and $Y=\left[\begin{array}{cc}
0&B\\
B^{*} &0\\
\end{array}\right]$ in above inequality. So,
\begin{eqnarray*}
2\left|\left|\left|\left[\begin{array}{cc}
A&0\\
0 &C\\
\end{array}\right]\right|\right|\right|&\leq& \left|\left|\left|\left[\begin{array}{cc}
A&B\\
B^{*} &C\\
\end{array}\right]\right|\right|\right|+\left|\left|\left|\left[\begin{array}{cc}
A&-B\\
-B^{*} &C\\
\end{array}\right]\right|\right|\right|\\
&=&2\left|\left|\left|\left[\begin{array}{cc}
A&B\\
B^{*} &C\\
\end{array}\right]\right|\right|\right|.
\end{eqnarray*}
Hence,
$$|||
A\oplus C
|||\leq \left|\left|\left|\left[\begin{array}{cc}
A&B\\
B^{*} &C\\
\end{array}\right]\right|\right|\right|,$$
for all $A,B,C\in B(H)$. $\left[\begin{array}{cc}
A&0\\
0 &C\\
\end{array}\right]$ is called a \textit{pinching} of $\left[\begin{array}{cc}
A&B\\
B^{*}&C\\
\end{array}\right]$. \\
  For operator norm we have
$$\max\{\|A\|,\|C\|\}\leq  \left\|\left[\begin{array}{cc}
A&B\\
B^{*} &C\\
\end{array}\right]\right\|.$$

Here we give a generalization of the inequality which has been
proved by Bhatia and Kittaneh in \cite{bha1}. They have shown that
if $A$ and $B$ are two $n\times n$ matrices, then
$$s_{j}(A+B)\leq
s_{j}\left((|A|+|B|)\oplus(|A^{*}|+|B^{*}|)\right),$$ for $1\leq
j\leq n$.\\
For giving a generalization of above inequality, we need the
following lemmas.

In the rest of this section, we always assume that $f$ and $g$ are
non-negative functions on $[0, \infty)$ which are continuous and
satisfying the relation $f(t)g(t)=t$ for all $t\in [0, \infty)$.
\\
\\
The following lemma is due to Kittaneh
\cite{kit}.
\begin{lemma}\label{function}
Let $A, B$, and $C$ be operators in $B(H)$ such that $A$ and $B$ are
positive and $BC=CA$. If $\left[\begin{array}{cc}
A&C^{*}\\
C&B\\
\end{array}\right]$ is positive in $B(H\oplus H)$, then $\left[\begin{array}{cc}
f(A)^{2}&C^{*}\\
C&g(B)^{2}\\
\end{array}\right]$ is
also positive.
\end{lemma}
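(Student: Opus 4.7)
My plan is to use the well-known correspondence between positive $2\times 2$ operator block matrices and contractions: for positive operators $P,Q\in B(H)$ and $X\in B(H)$, the block matrix $\left[\begin{array}{cc}P & X^{*}\\X & Q\end{array}\right]$ is positive if and only if there exists a contraction $K\in B(H)$ such that $X=Q^{1/2}KP^{1/2}$, in which case one has the sandwich factorization
\begin{equation*}
\left[\begin{array}{cc}P & X^{*}\\X & Q\end{array}\right]=\left[\begin{array}{cc}P^{1/2}&0\\0&Q^{1/2}\end{array}\right]\left[\begin{array}{cc}I & K^{*}\\K & I\end{array}\right]\left[\begin{array}{cc}P^{1/2}&0\\0&Q^{1/2}\end{array}\right].
\end{equation*}
The hypothesis therefore produces a contraction $K\in B(H)$ with $C=B^{1/2}KA^{1/2}$, and the conclusion will follow once I exhibit a contraction $K'$ such that $C=g(B)K'f(A)$, since applying the same characterization in reverse with $P=f(A)^{2}$ and $Q=g(B)^{2}$ will then give positivity of the target block matrix.

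The key step is to show that $K$ itself plays the role of $K'$, and this is where the commutation hypothesis $BC=CA$ enters. Substituting $C=B^{1/2}KA^{1/2}$ into $BC=CA$ yields $B^{3/2}KA^{1/2}=B^{1/2}KA^{3/2}$, which after cancelling $B^{1/2}$ on the left and $A^{1/2}$ on the right collapses to $BK=KA$. Iterating gives $B^{n}K=KA^{n}$ for every $n\ge 0$, hence $p(B)K=Kp(A)$ for every polynomial $p$, and then uniform approximation on the compact set $\sigma(A)\cup\sigma(B)\subset[0,\infty)$ together with norm continuity of the continuous functional calculus extends this to $h(B)K=Kh(A)$ for every continuous $h$ on that set. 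Applying this with $h=g$ and $h(t)=t^{1/2}$, and invoking the identity $f(t)g(t)=t$, I would compute
\begin{equation*}
g(B)Kf(A)=Kg(A)f(A)=KA=KA^{1/2}A^{1/2}=B^{1/2}KA^{1/2}=C,
\end{equation*}
so that $K'=K$ works and the contractive factorization in the opposite direction delivers the target inequality.

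The main obstacle is that cancelling $B^{1/2}$ and $A^{1/2}$ to pass from $B^{3/2}KA^{1/2}=B^{1/2}KA^{3/2}$ to $BK=KA$ requires $A$ and $B$ to be invertible. I would handle the general case by a routine perturbation: set $A_{\epsilon}=A+\epsilon I$ and $B_{\epsilon}=B+\epsilon I$ for $\epsilon>0$. The relation $B_{\epsilon}C=CA_{\epsilon}$ still holds because the perturbation contributes $\epsilon C$ to both sides, and $\left[\begin{array}{cc}A_{\epsilon} & C^{*}\\C & B_{\epsilon}\end{array}\right]$ remains positive as it equals the original block matrix plus a positive multiple of the identity on $H\oplus H$. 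Running the previous argument for the invertible pair $(A_{\epsilon},B_{\epsilon})$ and then letting $\epsilon\downarrow 0$, with norm continuity of $f$ and $g$ under the functional calculus yielding $f(A_{\epsilon})\to f(A)$ and $g(B_{\epsilon})\to g(B)$ in the limit, recovers the lemma in full generality.
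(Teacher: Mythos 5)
Your argument is correct. Note first that the paper does not prove this lemma at all: it is quoted verbatim from Kittaneh's paper \cite{kit} and used as a black box, so there is no in-paper proof to compare against; what you have written is a genuine, self-contained proof. The two pillars of your argument are both sound: (i) the characterization that $\left[\begin{smallmatrix} P & X^{*}\\ X & Q\end{smallmatrix}\right]\geq 0$ iff $X=Q^{1/2}KP^{1/2}$ for a contraction $K$, together with the sandwich factorization (and since $f,g\geq 0$ one indeed has $(f(A)^{2})^{1/2}=f(A)$, $(g(B)^{2})^{1/2}=g(B)$, so exhibiting $C=g(B)Kf(A)$ with $\|K\|\leq 1$ does give positivity of the target matrix); and (ii) the intertwining $BK=KA\Rightarrow h(B)K=Kh(A)$ for continuous $h$, obtained by polynomial approximation uniformly on a compact interval containing $\sigma(A)\cup\sigma(B)$. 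The chain $g(B)Kf(A)=Kg(A)f(A)=KA=B^{1/2}KA^{1/2}=C$ correctly exploits $f(t)g(t)=t$ and the case $h(t)=t^{1/2}$ of the intertwining. You also correctly identified the one genuine obstacle --- the cancellation of $B^{1/2}$ and $A^{1/2}$ needs invertibility --- and the perturbation $A_{\epsilon}=A+\epsilon I$, $B_{\epsilon}=B+\epsilon I$ handles it cleanly: the hypotheses are preserved, and the conclusion passes to the limit because $f(A_{\epsilon})\to f(A)$ and $g(B_{\epsilon})\to g(B)$ in norm and a norm limit of positive operators is positive (no convergence of the contractions $K_{\epsilon}$ is needed). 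This is essentially the standard route to Kittaneh's lemma, and it is a worthwhile addition given that the paper leaves the result unproved.
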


Let $T$ be an operator in $B(H)$. We know that
$\left[\begin{array}{cc}
|T|&T^{*}\\
T&|T^{*}|\\
\end{array}\right]\geq0$, if $T$ is normal then we have $\left[\begin{array}{cc}
|T|&T^{*}\\
T&|T|\\
\end{array}\right]\geq0$, ( see \cite{bhabook2}).

\begin{lemma}\label{lem1}
Let $A$ be an operator in $B(H)$. Then we have
\begin{equation}\label{l1}
\left[\begin{array}{cc}
|A|^{2\alpha}&A^{*}\\
A&|A^{*}|^{2(1-\alpha)}\\
\end{array}\right]\geq0,
\end{equation}
where $0\leq\alpha\leq1$.
\end{lemma}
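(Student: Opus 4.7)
The plan is to reduce Lemma \ref{lem1} directly to Kittaneh's Lemma \ref{function} by choosing the right functions $f$ and $g$.

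First I would take the two-parameter family
\[
f(t)=t^{\alpha},\qquad g(t)=t^{1-\alpha},
\]
which for $0\le\alpha\le 1$ are non-negative and continuous on $[0,\infty)$ and plainly satisfy the relation $f(t)g(t)=t$ required in the paragraph preceding Lemma \ref{function}. So these are legitimate inputs to that lemma.

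Next I would set up the block matrix to which Lemma \ref{function} is applied. Take the ``$A$'' and ``$B$'' of that lemma to be $|A|$ and $|A^{*}|$ respectively (both positive), and take the ``$C$'' of that lemma to be $A$ itself, so that $C^{*}=A^{*}$. The positivity hypothesis $\left[\begin{smallmatrix}|A|&A^{*}\\ A&|A^{*}|\end{smallmatrix}\right]\ge 0$ is exactly the fact recalled from \cite{bhabook2} in the paragraph immediately above the lemma, so nothing needs to be shown there.

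The one condition that is not free of charge is the intertwining relation $BC=CA$ of Lemma \ref{function}, which here reads
\[
|A^{*}|\,A \;=\; A\,|A|.
\]
This is the main (and only) step to verify. I would obtain it from the polar decomposition $A=U|A|$, which gives $|A^{*}|=U|A|U^{*}$ on $\overline{\mathrm{ran}\,A}$ and hence
\[
|A^{*}|A \;=\; U|A|U^{*}\,U|A| \;=\; U|A|^{2} \;=\; A|A|,
\]
the cancellation $U^{*}U=I$ on $\overline{\mathrm{ran}\,|A|}$ being where the polar decomposition does its work. This is a standard identity but it is really the technical heart of the argument.

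Having verified the hypotheses, Lemma \ref{function} yields
\[
\left[\begin{array}{cc} f(|A|)^{2} & A^{*}\\ A & g(|A^{*}|)^{2}\end{array}\right]
\;=\;
\left[\begin{array}{cc} |A|^{2\alpha} & A^{*}\\ A & |A^{*}|^{2(1-\alpha)}\end{array}\right]
\;\ge\;0,
\]
which is precisely the inequality \eqref{l1}. The only subtlety to mention is the boundary cases $\alpha=0$ and $\alpha=1$, where one should interpret $|A|^{0}$ and $|A^{*}|^{0}$ as the projection onto the closure of the range of the respective operator (so that the intertwining identity above still makes sense on the relevant subspaces); on the orthogonal complement both sides vanish, and positivity is trivial there.
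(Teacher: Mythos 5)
Your proposal is correct and follows essentially the same route as the paper: both apply Kittaneh's Lemma \ref{function} with $f(t)=t^{\alpha}$, $g(t)=t^{1-\alpha}$ to the positive block operator $\left[\begin{smallmatrix}|A|&A^{*}\\ A&|A^{*}|\end{smallmatrix}\right]$, the only cosmetic difference being that you verify the intertwining identity $|A^{*}|A=A|A|$ via the polar decomposition while the paper deduces it from $A|A|^{2}=|A^{*}|^{2}A$.
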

\begin{proof}
It is easy to check that $A|A|^{2}=|A^{*}|^{2}A$, then we have
$A|A|=|A^{*}|A$ for $A\in B(H)$. Now by making use of Lemma
\ref{function}, for $f(t)=t^{\alpha}$ and $g(t)=t^{1-\alpha}$,
$0\leq\alpha\leq1$, and positivity of $\left[\begin{array}{cc}
|A|&A^{*}\\
A&|A^{*}|\\
\end{array}\right]$, we obtain the result.
\end{proof}
\begin{theorem}\label{vd1}
Let $A$ and $B$ be two operators in $K(H)$. Then we have
$$s_{j}(A+B)\leq
s_{j}\left((|A|^{2\alpha}+|B|^{2\alpha})\oplus(|A^{*}|^{2(1-\alpha)}+|B^{*}|^{2(1-\alpha)})\right),$$
for $j=1,2,\ldots$ where $0\leq\alpha\leq1$.
\end{theorem}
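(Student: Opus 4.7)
The plan is to derive the theorem as a direct consequence of Lemma~\ref{lem1} applied separately to $A$ and $B$, together with the singular value inequality~(\ref{kit2}) for positive $2\times 2$ block operator matrices.

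First, I would invoke Lemma~\ref{lem1} on each of $A$ and $B$ to obtain the two positive block operators
\[
\begin{bmatrix} |A|^{2\alpha} & A^{*} \\ A & |A^{*}|^{2(1-\alpha)} \end{bmatrix} \geq 0,
\qquad
\begin{bmatrix} |B|^{2\alpha} & B^{*} \\ B & |B^{*}|^{2(1-\alpha)} \end{bmatrix} \geq 0,
\]
for $\alpha\in[0,1]$. Since the sum of two positive operators on $H\oplus H$ is again positive, adding these two relations yields
\[
\begin{bmatrix} |A|^{2\alpha}+|B|^{2\alpha} & A^{*}+B^{*} \\ A+B & |A^{*}|^{2(1-\alpha)}+|B^{*}|^{2(1-\alpha)} \end{bmatrix} \geq 0.
\]

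Next I would apply inequality~(\ref{kit2}) to this positive block matrix, reading off its off-diagonal entry as $A^{*}+B^{*}$. Combined with the identity $s_{j}(A^{*}+B^{*})=s_{j}((A+B)^{*})=s_{j}(A+B)$, this immediately produces the claimed inequality in the statement.

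There is essentially no serious obstacle here: the analytical content has already been packed into Lemma~\ref{lem1} (which exploits the relation $A|A|=|A^{*}|A$ and Kittaneh's functional calculus lemma) and into the block matrix inequality~(\ref{kit2}) recalled in the previous section. The only mild point to verify is that the hypothesis $A,B\in K(H)$ guarantees that the off-diagonal block $A+B$ is compact, so that its singular values are well defined and the framework of~(\ref{kit2}) applies. The special cases $\alpha=0$ and $\alpha=1$ recover the Bhatia--Kittaneh inequality $s_{j}(A+B)\leq s_{j}((|A|+|B|)\oplus(|A^{*}|+|B^{*}|))$ (up to symmetry), confirming that the result is a genuine one-parameter generalization.
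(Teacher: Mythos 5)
Your argument is exactly the paper's proof: apply Lemma~\ref{lem1} to $A$ and to $B$, add the two positive block operators, and invoke inequality~(\ref{kit2}) together with $s_{j}(A^{*}+B^{*})=s_{j}(A+B)$. One small slip in your closing remark: the Bhatia--Kittaneh inequality $s_{j}(A+B)\leq s_{j}\left((|A|+|B|)\oplus(|A^{*}|+|B^{*}|)\right)$ is the case $\alpha=\tfrac{1}{2}$, not $\alpha=0$ or $\alpha=1$ (the latter degenerate to terms like $|A|^{0}+|B|^{0}$, which the paper treats separately), but this does not affect the validity of the proof itself.
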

\begin{proof}
Since sum of two positive operator is positive, Lemma \ref{lem1} implies that $$\left[\begin{array}{cc}
|A|^{2\alpha}+|B|^{2\alpha}&A^{*}+B^{*}\\
A+B&|A^{*}|^{2(1-\alpha)}+|B^{*}|^{2(1-\alpha)}\\
\end{array}\right]\geq0,$$
 By inequality
(\ref{kit2}) we have the result.
\end{proof}

\begin{corollary}
Let $A$ and $B$ be two operators in $K(H)$. Then we have
$$s_{j}(A+B)\leq
s_{j}\left((|A|+|B|)\oplus(|A^{*}|+|B^{*}|)\right),$$ for
$j=1,2,\ldots$.
\end{corollary}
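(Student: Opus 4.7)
The plan is to obtain this inequality as the $\alpha = 1/2$ specialization of Theorem \ref{vd1}. Substituting $\alpha = 1/2$ into the exponents gives $2\alpha = 1$ and $2(1-\alpha) = 1$, so $|A|^{2\alpha}$ collapses to $|A|$ and $|A^{*}|^{2(1-\alpha)}$ collapses to $|A^{*}|$, and similarly for $B$. The right-hand side of the theorem then reduces to exactly $s_{j}\bigl((|A|+|B|)\oplus(|A^{*}|+|B^{*}|)\bigr)$, which is the claimed bound. Since $1/2 \in [0,1]$, the hypothesis of the theorem is satisfied, and the corollary drops out immediately.

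There is essentially no obstacle here: the whole point of Theorem \ref{vd1} was to prove a one-parameter generalization of this very inequality of Bhatia and Kittaneh, so the corollary is a direct specialization. If one wanted a self-contained derivation that bypasses Lemma \ref{lem1} (and hence the operator-function machinery of Lemma \ref{function}), one could argue as follows: start from the classical fact that
$$\left[\begin{array}{cc} |A| & A^{*} \\ A & |A^{*}| \end{array}\right] \geq 0$$
for every $A \in B(H)$, add the corresponding inequality for $B$ to produce a positive $2 \times 2$ block matrix whose off-diagonal entry is $A+B$ and whose diagonal entries are $|A|+|B|$ and $|A^{*}|+|B^{*}|$, and then apply inequality (\ref{kit2}) directly. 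This is precisely the $\alpha = 1/2$ restriction of the proof of Theorem \ref{vd1}, and it makes explicit that the main work of the corollary is really done by the Audeh--Kittaneh bound already recorded as (\ref{kit2}).
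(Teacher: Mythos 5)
Your proof is correct and matches the paper's own argument exactly: the paper also derives the corollary by setting $\alpha = \tfrac{1}{2}$ in Theorem \ref{vd1}. Your additional self-contained remark is just the $\alpha = \tfrac{1}{2}$ instance of the proof of Theorem \ref{vd1}, so nothing essentially new is added.
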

\begin{proof}
Let $\alpha=\frac{1}{2}$ in Theorem \ref{vd1}.
\end{proof}
It is easy to see that if $A$ and $B$ are normal operator in $K(H)$,
then we have
$$s_{j}(A+B)\leq
s_{j}\left((|A|+|B|)\oplus(|A|+|B|)\right),$$ for $j=1,2,\ldots$.\\
\\
\\
On the other hand, for $\alpha=1$ in Theorem \ref{vd1}, we have
\begin{eqnarray*}
s_{j}(A+B)&\leq& s_{j}(|A|^{2}+|B|^{2}\oplus 2I)\\
&=&s_{j}(|A|^{2}+|B|^{2})\cup s_{j}(2I)\\
&=&s_{j}(A^{*}A+B^{*}B)\cup s_{j}(2I),
\end{eqnarray*}
for $j=1,2,\ldots$.

\begin{theorem}\label{haj}
Let $A$,$B$ and $X$ be operators in $B(H)$ such that $X$ is compact.
Then we have the following
$$s_{j}\left(AXB^{*}\right)\leq
s_{j}\left(A^{*}f(|X|)^{2}A\oplus B^{*}g(|X^{*}|)^{2}B\right),$$ for
$j=1,2,\ldots$.
\end{theorem}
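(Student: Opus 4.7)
The plan is to realize $AXB^{*}$ as the off-diagonal block of a positive $2\times 2$ operator block matrix whose diagonal entries are exactly $A^{*}f(|X|)^{2}A$ and $B^{*}g(|X^{*}|)^{2}B$, and then to invoke inequality (\ref{kit2}). This mirrors the pattern of Theorem \ref{vd1}: produce a positive block via Lemma \ref{function}, then read off the conclusion from (\ref{kit2}).

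The starting point is the positivity
\begin{equation*}
\left[\begin{array}{cc} |X^{*}| & X \\ X^{*} & |X| \end{array}\right]\geq 0,
\end{equation*}
which follows from the polar decomposition $X=U|X|$ by writing the block as $\left[\begin{array}{c} U \\ I \end{array}\right]|X|\left[\begin{array}{cc} U^{*} & I \end{array}\right]$. A short calculation from the same polar decomposition gives $|X|X^{*}=X^{*}|X^{*}|$, which is exactly the commutation hypothesis needed by Lemma \ref{function}. Applying that lemma with the pair $(g,f)$ of non-negative continuous functions on $[0,\infty)$ satisfying $f(t)g(t)=t$ then produces
\begin{equation*}
\left[\begin{array}{cc} g(|X^{*}|)^{2} & X \\ X^{*} & f(|X|)^{2} \end{array}\right]\geq 0.
\end{equation*}
This is the main input and generalizes Lemma \ref{lem1} from the power functions $t^{\alpha}$, $t^{1-\alpha}$ to arbitrary admissible $f$ and $g$.

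The second step is to conjugate this positive block by a suitable block-diagonal operator. Taking $S$ essentially of the form $\mathrm{diag}(A^{*},B^{*})$ (up to a swap chosen so that the off-diagonal of $S^{*}MS$ comes out as $AXB^{*}$ rather than an adjoint or transposed variant), and using that conjugation preserves positivity, one obtains a positive $2\times 2$ block of compact operators (compactness of $AXB^{*}$ is inherited from $X\in K(H)$) whose diagonal entries are precisely $A^{*}f(|X|)^{2}A$ and $B^{*}g(|X^{*}|)^{2}B$. Inequality (\ref{kit2}) applied to this positive block immediately yields the claimed
\begin{equation*}
s_{j}(AXB^{*})\leq s_{j}\bigl(A^{*}f(|X|)^{2}A\oplus B^{*}g(|X^{*}|)^{2}B\bigr).
\end{equation*}

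The analytic substance is entirely packaged in Lemma \ref{function}, so the main obstacle is not conceptual but notational: one must track the arrangement of $A,A^{*},B,B^{*}$ and the interchangeable roles of $f$ and $g$ carefully, and then exploit $s_{j}(T)=s_{j}(T^{*})$ if needed, so that the off-diagonal of the conjugated block is exactly $AXB^{*}$. Once this bookkeeping is settled, positivity together with (\ref{kit2}) closes the argument.
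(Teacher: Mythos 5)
Your proposal is correct and follows essentially the same route as the paper: obtain the positive block $\left[\begin{array}{cc} f(|X|)^{2}&X^{*}\\ X&g(|X^{*}|)^{2}\end{array}\right]\geq 0$ from Lemma \ref{function} (you work with the transposed block and, unlike the paper, justify its positivity and the commutation hypothesis explicitly via the polar decomposition), conjugate by a block-diagonal operator built from $A$ and $B$, and finish with inequality (\ref{kit2}). The bookkeeping worry you flag at the end is real but is equally present in the paper's own proof: any block-diagonal conjugation that makes the diagonal come out as $A^{*}f(|X|)^{2}A\oplus B^{*}g(|X^{*}|)^{2}B$ forces the off-diagonal block to be $B^{*}XA$ (equivalently $A^{*}X^{*}B$) rather than $AXB^{*}$, and conversely forcing the off-diagonal to be $AXB^{*}$ yields the diagonal $Ag(|X^{*}|)^{2}A^{*}\oplus Bf(|X|)^{2}B^{*}$; so what both arguments literally establish is $s_{j}(AXB^{*})\leq s_{j}\left(Ag(|X^{*}|)^{2}A^{*}\oplus Bf(|X|)^{2}B^{*}\right)$, i.e.\ the stated theorem up to the relabeling $A\mapsto B^{*}$, $B\mapsto A^{*}$, and one cannot get both the claimed diagonal and the claimed off-diagonal simultaneously.
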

\begin{proof}
Since $\left[\begin{array}{cc}
|X|&X^{*}\\
X&|X^{*}|\\
\end{array}\right]\geq0$, by Lemma \ref{function} we have\\ $$Y=\left[\begin{array}{cc}
f(|X|)^{2}&X^{*}\\
X&g(|X^{*}|)^{2}\\
\end{array}\right]\geq0.$$ Let $Z=\left[\begin{array}{cc}
A&0\\
0&B\\
\end{array}\right]$. Since $Y$ is positive, we have
$$Z^{*}YZ=\left[\begin{array}{cc}
A^{*}f(|X|)^{2}A&A^{*}X^{*}B\\
B^{*}XA&B^{*}g(|X^{*}|)^{2}B\\
\end{array}\right]\geq0.$$ Hence,
by  inequality (\ref{kit2}), we have the desired result.
\end{proof}
\noindent In above theorem, let $X$ be a normal operator. Then we have
$$s_{j}\left(AXB^{*}\right)\leq
s_{j}\left(A^{*}f(|X|)^{2}A\oplus B^{*}g(|X|)^{2}B\right),$$ for
$j=1,2,\ldots$.

\begin{corollary}\label{131}
Let $A$, $B$ and $X$ be  operators in $B(H)$ such that $X$ is
compact. Then we have
$$s_{j}\left(AXB^{*}\right)\leq
s_{j}\left(A^{*}|X|A\oplus B^{*}|X^{*}|B\right),$$ for $j=1,2,\ldots$.
\end{corollary}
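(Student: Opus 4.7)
The plan is to obtain the corollary as a direct specialization of Theorem \ref{haj}. Recall that in that theorem, $f$ and $g$ are any non-negative continuous functions on $[0,\infty)$ satisfying the factorization $f(t)g(t)=t$. The obvious symmetric choice is $f(t)=g(t)=t^{1/2}$, which clearly satisfies this condition and is non-negative and continuous on $[0,\infty)$.

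With this choice, we have $f(|X|)^{2}=|X|$ and $g(|X^{*}|)^{2}=|X^{*}|$ by the functional calculus for positive operators. Substituting into the conclusion of Theorem \ref{haj} immediately yields
$$s_{j}(AXB^{*})\leq s_{j}\bigl(A^{*}|X|A\oplus B^{*}|X^{*}|B\bigr),$$
for $j=1,2,\ldots$, which is precisely the statement of the corollary.

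There is no real obstacle here: the whole content has already been absorbed into the proof of Theorem \ref{haj}, and the corollary is a one-line specialization. The only thing worth being careful about is to point out explicitly that $t\mapsto t^{1/2}$ is admissible as a choice of both $f$ and $g$, so that the hypothesis $f(t)g(t)=t$ is verified on the spectra of $|X|$ and $|X^{*}|$.
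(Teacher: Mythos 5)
Your proof matches the paper's own: the corollary is obtained by taking $f(t)=g(t)=t^{1/2}$ in Theorem \ref{haj}, so that $f(|X|)^{2}=|X|$ and $g(|X^{*}|)^{2}=|X^{*}|$. Nothing further is needed.
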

\begin{proof}
Let $f(t)=t^{\frac{1}{2}}$ and $g(t)=t^{\frac{1}{2}}$ in Theorem
\ref{haj}.
\end{proof}

Here, we apply above corollary to show that singular values of $AXB^{*}$ are dominated by  singular values of $\|X\|(A\oplus B)$. For our proof we need the following lemma.
\begin{lemma}\label{hz}\cite[p. 75]{bhabook}
Let $A,B\in B(H)$ such that $B$ is compact.  Then
$$s_{j}(AB)\leq \|A\|s_{j}(B),$$
for $j=1,2,\ldots$.
\end{lemma}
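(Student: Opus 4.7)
The plan is to reduce the inequality to the Weyl--Schmidt approximation characterization of singular values of a compact operator, namely
$$s_{j}(T)=\min\{\|T-F\|:F\in B(H),\ \operatorname{rank}(F)<j\},$$
valid for every $T\in K(H)$. This formula is the main tool; once it is available, the argument is essentially three lines, relying only on submultiplicativity of the operator norm and the trivial observation that left multiplication cannot increase rank.

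First I would fix $j\geq 1$ and let $F\in B(H)$ be any finite-rank operator with $\operatorname{rank}(F)<j$. Since $\operatorname{rank}(AF)\leq \operatorname{rank}(F)<j$, the operator $AF$ is an admissible competitor in the minimum defining $s_{j}(AB)$, so
$$s_{j}(AB)\leq \|AB-AF\|=\|A(B-F)\|\leq \|A\|\,\|B-F\|.$$
Next I would take the infimum over all such $F$ on the right-hand side, which by the approximation formula equals $\|A\|\,s_{j}(B)$, yielding the desired inequality.

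The only genuine obstacle is having the approximation characterization of $s_{j}$ at hand; this is a standard fact for compact operators (cited from Bhatia's book, which is already in the reference list). Without it, one could instead invoke the min--max characterization
$$s_{j}(T)=\min_{\dim M = j-1}\ \max_{\substack{x\perp M\\ \|x\|=1}}\|Tx\|,$$
apply it to $AB$ on a fixed $(j-1)$-dimensional $M$, and bound $\|ABx\|\leq \|A\|\,\|Bx\|$ before minimizing over $M$; this route is slightly less transparent but produces the same bound. Either way, the lemma follows almost immediately and requires no additional machinery.
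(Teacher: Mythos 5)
Your proof is correct. The paper does not actually prove this lemma---it is quoted verbatim from Bhatia's book (p.~75)---so there is no in-paper argument to compare against; your reduction to the rank-approximation characterization $s_{j}(T)=\min\{\|T-F\|:\operatorname{rank}(F)<j\}$, using that $AF$ is an admissible competitor and that $\|A(B-F)\|\leq\|A\|\,\|B-F\|$, is exactly the standard textbook derivation, and the min--max route you sketch as an alternative works equally well. The one hypothesis worth making explicit is that $AB$ is compact (it is, since $K(H)$ is an ideal), so that $s_{j}(AB)$ is indeed given by that minimum.
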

\begin{theorem}\label{ttyo}
Let $A, B,X\in B(H)$ such that $A$ and $ B$ are arbitrary compact. Then, we have
$$s_{j}(AXB^{*})\leq \|X\|s_{j}^{2}(A\oplus B),$$
for $j=1,2,\ldots$.
\end{theorem}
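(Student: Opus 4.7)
The plan is to chain two earlier results. First, Corollary \ref{131} applied to $A, B, X$ gives
$$s_{j}(AXB^{*})\leq s_{j}\bigl(A^{*}|X|A\oplus B^{*}|X^{*}|B\bigr),$$
so the task reduces to showing
$$s_{j}\bigl(A^{*}|X|A\oplus B^{*}|X^{*}|B\bigr)\leq \|X\|\,s_{j}^{2}(A\oplus B).$$
Note that although Corollary \ref{131} is phrased with $X$ compact, its proof only requires the resulting block matrix to be compact, which here follows from the compactness of $A$ and $B$ (the compact operators form an ideal), so the reduction step is justified in the present setting.

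For the remaining estimate I would express the direct sum as a single operator of the form $M^{*}M$. Setting $M=|X|^{1/2}A\oplus |X^{*}|^{1/2}B$ one has $M^{*}M = A^{*}|X|A\oplus B^{*}|X^{*}|B$, so $s_{j}(M^{*}M)=s_{j}(M)^{2}$. Writing further
$$M=\bigl(|X|^{1/2}\oplus |X^{*}|^{1/2}\bigr)(A\oplus B)$$
and invoking Lemma \ref{hz} yields
$$s_{j}(M)\leq \bigl\||X|^{1/2}\oplus |X^{*}|^{1/2}\bigr\|\,s_{j}(A\oplus B)=\|X\|^{1/2}\,s_{j}(A\oplus B),$$
where I used $\||X|^{1/2}\|=\||X^{*}|^{1/2}\|=\|X\|^{1/2}$ together with the direct sum formula (\ref{eq12}). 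Squaring this bound and combining with the reduction step delivers exactly $s_{j}(AXB^{*})\leq \|X\|\,s_{j}^{2}(A\oplus B)$.

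The only mildly delicate point is the factorisation: recognising that the block-diagonal positive operator $A^{*}|X|A\oplus B^{*}|X^{*}|B$ equals $M^{*}M$ for an $M$ which itself splits as $\bigl(|X|^{1/2}\oplus |X^{*}|^{1/2}\bigr)(A\oplus B)$ is what lets Lemma \ref{hz} be applied once, rather than blockwise followed by some rearrangement. Everything else is routine. A completely parallel route that avoids the square-root factorisation is to use the positive operator inequalities $|X|\leq\|X\|I$ and $|X^{*}|\leq\|X\|I$, conjugate by $A$ and $B$, take the direct sum to obtain
$$A^{*}|X|A\oplus B^{*}|X^{*}|B \leq \|X\|\,(A\oplus B)^{*}(A\oplus B),$$
and then invoke monotonicity of $s_{j}$ under the positive operator order together with $s_{j}\bigl((A\oplus B)^{*}(A\oplus B)\bigr)=s_{j}(A\oplus B)^{2}$ to reach the same conclusion.
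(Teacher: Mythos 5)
Your argument is correct and is essentially the paper's own proof: both reduce via Corollary \ref{131} and then factor the block-diagonal operator as $M^{*}M$ with $M=(|X|^{1/2}\oplus|X^{*}|^{1/2})(A\oplus B)$ so that Lemma \ref{hz} and the identity $s_{j}(M^{*}M)=s_{j}^{2}(M)$ finish the job. Your remark justifying the use of Corollary \ref{131} when $X$ is merely bounded (the paper silently applies it outside its stated hypothesis) is a worthwhile addition, as is the alternative route via $|X|\leq\|X\|I$ and Weyl monotonicity.
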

\begin{proof}
From Corollary \ref{131} we have
\begin{eqnarray*}
s_{j}(AXB^{*})&\leq& s_{j}(A^{*}|X|A\oplus B^{*}|X^{*}|B)\\
&=& s_{j}\left(\left[\begin{array}{cc}
A&0\\
0&B\\
\end{array}\right]^{*}\left[\begin{array}{cc}
|X|&0\\
0&|X^{*}|\\
\end{array}\right]\left[\begin{array}{cc}
A&0\\
0&B\\
\end{array}\right]\right)\\
&=& s_{j}\left(\left[\begin{array}{cc}
A&0\\
0&B\\
\end{array}\right]^{*}\left[\begin{array}{cc}
|X|^{\frac{1}{2}}&0\\
0&|X^{*}|^{\frac{1}{2}}\\
\end{array}\right]^{*}\left[\begin{array}{cc}
|X|^{\frac{1}{2}}&0\\
0&|X^{*}|^{\frac{1}{2}}\\
\end{array}\right]\left[\begin{array}{cc}
A&0\\
0&B\\
\end{array}\right]\right)\\
&=&s_{j}\left(\left(\left[\begin{array}{cc}
|X|^{\frac{1}{2}}&0\\
0&|X^{*}|^{\frac{1}{2}}\\
\end{array}\right]\left[\begin{array}{cc}
A&0\\
0&B\\
\end{array}\right]\right)^{*}\left(\left[\begin{array}{cc}
|X|^{\frac{1}{2}}&0\\
0&|X^{*}|^{\frac{1}{2}}\\
\end{array}\right]\left[\begin{array}{cc}
A&0\\
0&B\\
\end{array}\right]\right)\right)\\
&=&s_{j}\left(\left|\left[\begin{array}{cc}
|X|^{\frac{1}{2}}&0\\
0&|X^{*}|^{\frac{1}{2}}\\
\end{array}\right]\left[\begin{array}{cc}
A&0\\
0&B\\
\end{array}\right]\right|^{2}\right)\\
&=& s_{j}^{2}\left(\left[\begin{array}{cc}
|X|^{\frac{1}{2}}&0\\
0&|X^{*}|^{\frac{1}{2}}\\
\end{array}\right]\left[\begin{array}{cc}
A&0\\
0&B\\
\end{array}\right]\right)\\
&\leq& \left\|\left[\begin{array}{cc}
|X|^{\frac{1}{2}}&0\\
0&|X^{*}|^{\frac{1}{2}}\\
\end{array}\right]\right\|^{2}s_{j}^{2}(A\oplus B)\\
&=&\|X\|s_{j}^{2}(A\oplus B),
\end{eqnarray*}
for $j=1,2,\ldots$. The last inequality follows by Lemma \ref{hz}.
\end{proof}
In Theorem \ref{ttyo}, let $A$ and $B$ be positive operators in $K(H)$. Then
we have
\begin{equation}\label{maj}
s_{j}(A^{\frac{1}{2}}XB^{\frac{1}{2}})\leq \|X\|s_{j}(A\oplus B),
\end{equation}
for $j=1,2,\ldots$.

\begin{corollary}\label{gb}
Let $A$ and $B$ be two operators in $K(H)$. Then we have
\begin{equation}\label{cor1}
s_{j}(AB^{*})\leq s_{j}\left(A^{*}A\oplus B^{*}B\right),
\end{equation}
 for
$j=1,2,\ldots$.
\end{corollary}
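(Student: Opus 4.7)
The plan is to manufacture a positive $2\times 2$ block operator whose off-diagonal entry is $AB^{*}$ and whose diagonal entries are $AA^{*}$ and $BB^{*}$, and then apply inequality (\ref{kit2}) directly. The natural candidate is
$$M=\left[\begin{array}{c}A\\ B\end{array}\right]\left[\begin{array}{cc}A^{*} & B^{*}\end{array}\right]=\left[\begin{array}{cc}AA^{*} & AB^{*}\\ BA^{*} & BB^{*}\end{array}\right],$$
viewed as an operator on $H\oplus H$. Being of the form $TT^{*}$ with $T=\left[\begin{array}{c}A\\B\end{array}\right]\colon H\to H\oplus H$, the operator $M$ is automatically positive, so the hypothesis of (\ref{kit2}) is satisfied.

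Next, I would feed $M$ into inequality (\ref{kit2}), which immediately yields
$$s_{j}(AB^{*})\leq s_{j}(AA^{*}\oplus BB^{*}),\qquad j=1,2,\ldots$$
It then remains to replace $AA^{*}$ and $BB^{*}$ on the right by $A^{*}A$ and $B^{*}B$. For any compact operator $T$, the positive compact operators $T^{*}T$ and $TT^{*}$ have the same sequence of singular values, since their nonzero eigenvalues coincide with the same multiplicities and, in the infinite-dimensional separable setting, both contribute identical infinite tails of zeros. Consequently $s_{j}(AA^{*})=s_{j}(A^{*}A)$ and $s_{j}(BB^{*})=s_{j}(B^{*}B)$, and the merged decreasing rearrangements of the eigenvalues of the two direct sums coincide, giving $s_{j}(AA^{*}\oplus BB^{*})=s_{j}(A^{*}A\oplus B^{*}B)$.

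There is no substantive obstacle: once the correct block operator is chosen, the corollary is a three-line application of machinery already in place. If instead one tried to deduce it from Theorem \ref{haj} or Corollary \ref{131} by setting the middle factor to the identity, the compactness hypothesis on $X$ would be violated in the infinite-dimensional case; the $TT^{*}$ construction above bypasses this difficulty, which is the only point in the argument that needs even mild attention.
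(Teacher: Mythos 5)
Your proof is correct, but it is not the route the paper takes. The paper's entire proof is the one line ``Let $X=I$ in Corollary \ref{131}'' --- precisely the shortcut you flag as problematic, since in the infinite-dimensional setting the identity is not compact and so the hypothesis of Corollary \ref{131} is formally violated (the gap is harmless there, because with $A,B$ compact the block operator $\left[\begin{smallmatrix}A^{*}A & A^{*}B\\ B^{*}A & B^{*}B\end{smallmatrix}\right]$ is compact anyway and inequality (\ref{kit2}) still applies, but the paper does not say this). Your argument instead builds the positive compact operator $M=TT^{*}$ with $T=\left[\begin{smallmatrix}A\\ B\end{smallmatrix}\right]$, applies (\ref{kit2}) to get $s_{j}(AB^{*})\leq s_{j}(AA^{*}\oplus BB^{*})$, and then passes to $A^{*}A\oplus B^{*}B$ via the standard fact that $T^{*}T$ and $TT^{*}$ have identical singular value sequences, so the merged decreasing rearrangements of the two direct sums agree. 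Each step is sound, and this has two small advantages over the paper's version: it sidesteps the compactness quibble entirely, and it produces the off-diagonal block $AB^{*}$ directly (the paper's route, traced through the proof of Theorem \ref{haj}, actually yields the block $B^{*}A$ and silently relies on a relabeling). The only cost is that you need the extra observation $s_{j}(AA^{*}\oplus BB^{*})=s_{j}(A^{*}A\oplus B^{*}B)$, which the paper's formulation gets for free.
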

\begin{proof}
Let $X=I$  in
Corollary \ref{131}.
\end{proof}
Moreover, we can write inequality (\ref{cor1}) in the following form
\begin{eqnarray*}
s_{j}(AB^{*})&\leq& s_{j}(|A|^{2}\oplus |B|^{2})\\
&=&s_{j}^{2}(|A|\oplus|B|)=s_{j}^{2}(A\oplus B),
\end{eqnarray*}
for $j=1,2,\ldots$.\\
We should note here that inequality (\ref{cor1}) can be obtained by
Theorem 1 in \cite{bha3} and Corollary 2.2 in \cite{hir}.\\
Here, we give two results of Corollary \ref{gb}.
As the first application, let $A=\left[\begin{array}{cc}
X&Y\\
0&0\\
\end{array}\right]$ and $B=\left[\begin{array}{cc}
Y&-X\\
0&0\\
\end{array}\right]$, such that $X,Y\in K(H)$ then by easy computations we have
$$s_{j}(XY^{*}-YX^{*})\leq
s_{j}((XX^{*}+YY^{*})\oplus(XX^{*}+YY^{*})),$$ for $j=1,2,\ldots$.\\

For obtaining second application, replace $A$ and $B$ in (\ref{cor1}) by $AX^{\alpha}$ and $BX^{(1-\alpha)}$ respectively, where $X$ is a compact positive operator and $\alpha\in \mathbb{R}$. So, we have
\begin{eqnarray*}
s_{j}(AXB^{*})&\leq& s_{j}\left(X^{\alpha}A^{*}AX^{\alpha}\oplus X^{(1-\alpha)}B^{*}BX^{(1-\alpha)}\right)\\
&=& s_{j}\left(\left[\begin{array}{cc}
X^{\alpha}A^{*}AX^{\alpha}&0\\
0&X^{(1-\alpha)}B^{*}BX^{(1-\alpha)}\\
\end{array}\right]    \right)\\
&=&s_{j}\left(\left[\begin{array}{cc}
X^{\alpha}A^{*}&0\\
0&X^{(1-\alpha)}B^{*}\\
\end{array}\right] \left[\begin{array}{cc}
AX^{\alpha}&0\\
0&BX^{(1-\alpha)}\\
\end{array}\right]    \right)\\
&=&s_{j}\left( \left[\begin{array}{cc}
AX^{\alpha}&0\\
0&BX^{(1-\alpha)}\\
\end{array}\right]\left[\begin{array}{cc}
X^{\alpha}A^{*}&0\\
0&X^{(1-\alpha)}B^{*}\\
\end{array}\right]    \right)\\
&=&s_{j}\left( \left[\begin{array}{cc}
AX^{2\alpha}A^{*}&0\\
0&BX^{2(1-\alpha)}B^{*}\\
\end{array}\right]\right)\\
&=& s_{j}(AX^{2\alpha}A^{*}\oplus BX^{2(1-\alpha)}B^{*}),
\end{eqnarray*}
for all $j=1,2,\ldots$.\\
Finally, we have 
\begin{equation}\label{embr}
s_{j}(AXB^{*})\leq s_{j}(AX^{2\alpha}A^{*}\oplus BX^{2(1-\alpha)}B^{*}),
\end{equation}
for all $j=1,2,\ldots$.\\
By a similar proof of Theorem \ref{ttyo} to inequality (\ref{embr}), we obtain
$$s_{j}(AXB^{*})\leq \max\{\|X^{2\alpha}\|,\|X^{2(1-\alpha)}\|\}s_{j}^{2}(A\oplus B),$$
for all $j=1,2,\ldots$.\\
In above inequality, for positive operators $A$ and $B$ in $K(H)$ we have
$$ s_{j}(A^{\frac{1}{2}}XB^{\frac{1}{2}})\leq \max\{\|X^{2\alpha}\|,\|X^{2(1-\alpha)}\|\}s_{j}(A\oplus B)$$
for all $j=1,2,\ldots$.

\section{\textbf{Some singular value inequalities for normal operators}}\label{section4}
Here we give some results for compact normal operators. For
every operator $A$, the Cartesian decomposition is to write
$A=\Re(A)+i\Im(A)$, where $\Re(A)=\frac{A+A^{*}}{2}$ and
$\Im(A)=\frac{A-A^{*}}{2i}$. If $A$ is normal operator then $\Re(A)$
and $\Im(A)$ commute together and vice versa.
\begin{theorem}\label{normal}
Let $A_{1},A_{2},\ldots,A_{n}$ be normal operators in $K(H)$. Then
we have
\begin{eqnarray*}
\frac{1}{\sqrt{2}}s_{j}\left(\oplus_{i=1}^{n}(\Re(A_{i})+\Im(A_{i}))\right)&\leq& s_{j}(\oplus_{i=1}^{n}A_{i})\\
&\leq&
s_{j}\left(\oplus_{i=1}^{n}(|\Re(A_{i})|+|\Im(A_{i})|)\right),
\end{eqnarray*}
 for
$j=1,2,\ldots$.
\end{theorem}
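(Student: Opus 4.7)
The plan is to collapse the direct-sum statement to a single-operator version and then exploit the functional-calculus symmetry that normality provides. Setting $T=\oplus_{i=1}^{n}A_{i}$ produces a normal compact operator with $\Re(T)=\oplus_{i=1}^{n}\Re(A_{i})$ and $\Im(T)=\oplus_{i=1}^{n}\Im(A_{i})$, and since $|T|=\oplus_{i=1}^{n}|A_{i}|$, the stated inequalities follow at once from the single-operator estimates
$$\frac{1}{\sqrt{2}}\,s_{j}(\Re(T)+\Im(T))\leq s_{j}(T)\leq s_{j}(|\Re(T)|+|\Im(T)|).$$

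The main observation is that normality forces $\Re(T)\Im(T)=\Im(T)\Re(T)$, so a short calculation gives
$$|T|^{2}=T^{*}T=(\Re(T)-i\Im(T))(\Re(T)+i\Im(T))=\Re(T)^{2}+\Im(T)^{2}.$$
Moreover, because $|\Re(T)|$ and $|\Im(T)|$ arise from commuting self-adjoint operators via the continuous functional calculus, they also commute; in particular their product is a positive operator.

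For the upper bound I would expand, using this commutativity,
$$(|\Re(T)|+|\Im(T)|)^{2}=\Re(T)^{2}+\Im(T)^{2}+2|\Re(T)||\Im(T)|\geq|T|^{2},$$
and invoke operator monotonicity of $t\mapsto t^{1/2}$ on the positive cone to conclude $|T|\leq|\Re(T)|+|\Im(T)|$. Weyl monotonicity of eigenvalues of positive compact operators then delivers $s_{j}(T)=\lambda_{j}(|T|)\leq\lambda_{j}(|\Re(T)|+|\Im(T)|)=s_{j}(|\Re(T)|+|\Im(T)|)$.

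For the lower bound, $\Re(T)+\Im(T)$ is self-adjoint, and the same commutativity turns the gap between $2|T|^{2}$ and $(\Re(T)+\Im(T))^{2}$ into the perfect square
$$2|T|^{2}-(\Re(T)+\Im(T))^{2}=(\Re(T)-\Im(T))^{2}\geq0.$$
Operator monotonicity of the square root then yields $|\Re(T)+\Im(T)|\leq\sqrt{2}\,|T|$, and Weyl monotonicity again gives $s_{j}(\Re(T)+\Im(T))\leq\sqrt{2}\,s_{j}(T)$. The only non-elementary ingredients are operator monotonicity of $t^{1/2}$ and Weyl's eigenvalue monotonicity; the rest is bookkeeping around the commuting Cartesian components that normality supplies, so I expect no real obstacle beyond keeping the cross terms straight in the two expansions.
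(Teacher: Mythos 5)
Your argument is correct and follows essentially the same route as the paper's: reduce to the normal operator $T=\oplus_{i=1}^{n}A_{i}$, use commutativity of the Cartesian components to get $|T|^{2}=\Re(T)^{2}+\Im(T)^{2}$, and compare with $(\Re(T)+\Im(T))^{2}$ and $(|\Re(T)|+|\Im(T)|)^{2}$ via Weyl's monotonicity principle. The only difference is cosmetic: you derive the key inequality $\sqrt{\Re(T)^{2}+\Im(T)^{2}}\leq|\Re(T)|+|\Im(T)|$ explicitly by squaring and invoking operator monotonicity of $t\mapsto t^{1/2}$, whereas the paper simply asserts it.
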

\begin{proof}
Let $A_{1},A_{2},\ldots,A_{n}$ be normal operators, then
$$\oplus_{i=1}^{n}A_{i}=   \left(
    \begin{array}{ccccc}
    A_{1}    &        &    &   0  &  \\
    & A_{2}      &    &    &  \\
        &  & & \ddots   &  \\
          &  0       &       &  & A_{n}\\
  \end{array}\right)$$
is normal, so we have
$$(\oplus_{i=1}^{n}\Re(A_{i}))(\oplus_{i=1}^{n}\Im(A_{i}))=(\oplus_{i=1}^{n}\Im(A_{i}))(\oplus_{i=1}^{n}\Re(A_{i}))).$$
By above equation, we obtain the following
$$\sqrt{(\oplus_{i=1}^{n}A_{i})^{*}(\oplus_{i=1}^{n}A_{i})}=\sqrt{(\oplus_{i=1}^{n}\Re(A_{i}))^{2}+(\oplus_{i=1}^{n}\Im(A_{i}))^{2}}.$$
So
\begin{eqnarray*}
s_{j}(\oplus_{i=1}^{n}A_{i})&=&s_{j}(|\oplus_{i=1}^{n}A_{i}|)\\
&=&s_{j}\left(\sqrt{(\oplus_{i=1}^{n}A_{i})^{*}(\oplus_{i=1}^{n}A_{i})}\right)\\
&=&s_{j}\left(\sqrt{(\oplus_{i=1}^{n}\Re(A_{i}))^{2}+(\oplus_{i=1}^{n}\Im(A_{i}))^{2}}\right),
\end{eqnarray*}
for $j=1,2,\ldots$.\\ By using Weyl's monotonicity principle \cite{bhabook} and the inequality\\
 $$\sqrt{(\oplus_{i=1}^{n}\Re(A_{i}))^{2}+(\oplus_{i=1}^{n}\Im(A_{i}))^{2}}\leq
|\oplus_{i=1}^{n}\Re(A_{i})|+|\oplus_{i=1}^{n}\Im(A_{i})|,$$ we have
the following
$$s_{j}\left(\sqrt{(\oplus_{i=1}^{n}\Re(A_{i}))^{2}+(\oplus_{i=1}^{n}\Im(A_{i}))^{2}}\right)\leq s_{j}(|\oplus_{i=1}^{n}\Re(A_{i})|+|\oplus_{i=1}^{n}\Im(A_{i})|),$$
for $j=1,2,\ldots$. Now for proving left side inequality, we recall
the following inequality $$0\leq
(\Re(A_{i})+\Im(A_{i})^{*}(\Re(A_{i})+\Im(A_{i})\leq
2(\Re(A_{i})^{2}+\Im(A_{2})^{2}).$$ Therefore, by using the Weyl's
monotonicity principle we can write
$$s_{j}\left(\sqrt{\left((\oplus_{i=1}^{n}\Re(A_{i}))+(\oplus_{i=1}^{n}\Im(A_{i}))\right)^{*}\left((\oplus_{i=1}^{n}\Re(A_{i}))+(\oplus_{i=1}^{n}\Im(A_{i}))\right)}\right),$$
which is less than $$
\sqrt{2}s_{j}\left(\sqrt{(\oplus_{i=1}^{n}\Re(A_{i}))^{2}+(\oplus_{i=1}^{n}\Im(A_{i}))^{2}}\right).$$
 for $j=1,2,\ldots$.
Therefore,
\begin{eqnarray*}
s_{j}((\oplus_{i=1}^{n}\Re(A_{i}))+(\oplus_{i=1}^{n}\Im(A_{i})))
&=&s_{j}(|(\oplus_{i=1}^{n}\Re(A_{i}))+(\oplus_{i=1}^{n}\Im(A_{i}))|)\\
&\leq&
\sqrt{2}s_{j}(\sqrt{(\oplus_{i=1}^{n}\Re(A_{i}))^{2}+(\oplus_{i=1}^{n}\Im(A_{i}))^{2}}).
\end{eqnarray*}
 for $j=1,2,\ldots$.
\end{proof}
The following example shows that normal condition is necessary.
\begin{example}
Let $A=\left[\begin{array}{cc}
-1+i&1\\
i&1+2i\\
\end{array}\right]$, then a calculation shows $$s_{2}(\Re(A)+i\Im(A))\approx
1.34>s_{2}(|\Re(A)|+|\Im(A)|)\approx 1.27.$$
\end{example}
\begin{corollary}\label{coro1}
Let $A$ be a normal operator in $K(H)$. Then we have
$$(1/\sqrt{2})s_{j}(\Re(A)+\Im(A))\leq s_{j}(A)\leq s_{j}(|\Re(A)|+|\Im(A)|),$$
for $j=1,2,\ldots$.
\end{corollary}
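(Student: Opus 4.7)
The plan is to derive this statement as the degenerate case $n=1$ of Theorem \ref{normal}. When $n=1$, the formal direct sum $\oplus_{i=1}^{1} X_i$ collapses to $X_1$, so all three expressions in the conclusion of Theorem \ref{normal} become their unsummed counterparts: $\oplus_{i=1}^{1} A_i = A$, while $\oplus_{i=1}^{1}(\Re(A_i)+\Im(A_i)) = \Re(A)+\Im(A)$ and $\oplus_{i=1}^{1}(|\Re(A_i)|+|\Im(A_i)|) = |\Re(A)|+|\Im(A)|$. Substituting $n=1$ into the two-sided inequality of Theorem \ref{normal} and invoking normality of $A$ (which is the hypothesis required for that theorem to apply) yields the desired chain in a single line.

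For completeness, I would note that one can also unwind the argument directly without appealing to the general $n$ case. Normality of $A$ gives $\Re(A)\Im(A) = \Im(A)\Re(A)$, so that $|A| = \sqrt{\Re(A)^2 + \Im(A)^2}$ and hence $s_j(A) = s_j\bigl(\sqrt{\Re(A)^2+\Im(A)^2}\bigr)$. The right-hand inequality then follows from the operator inequality $\sqrt{\Re(A)^2+\Im(A)^2} \leq |\Re(A)| + |\Im(A)|$ together with Weyl's monotonicity principle, while the left-hand inequality follows from $(\Re(A)+\Im(A))^*(\Re(A)+\Im(A)) \leq 2\bigl(\Re(A)^2 + \Im(A)^2\bigr)$ combined again with Weyl's monotonicity, yielding the factor $\sqrt{2}$.

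There is no real obstacle here: the corollary is an immediate specialization. The only thing worth checking carefully is that the direct-sum bookkeeping in Theorem \ref{normal} does indeed reduce to the stated scalars for $n=1$, which it does by inspection. I expect the authors' proof to be essentially one sentence: \emph{Take $n=1$ in Theorem \ref{normal}.}
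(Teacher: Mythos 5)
Your proposal is correct and matches the paper's intent exactly: the corollary is stated immediately after Theorem \ref{normal} with no separate proof, being precisely the case $n=1$ of that theorem. Your direct unwinding in the second paragraph is also just the $n=1$ instance of the paper's own proof of Theorem \ref{normal}, so nothing further is needed.
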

For each complex number $x=a+ib$, we know the following inequality holds
\begin{equation}\label{vatt}
\frac{1}{\sqrt{2}}|a+b|\leq |x|\leq |a|+|b|.
\end{equation}
Now, by applying Corollary \ref{coro1}, we can obtain operator version of inequality (\ref{vatt}).\\

Here, we determine
the upper and lower bound for $A+iA^{*}$.
\begin{theorem}
Let $A_{1},A_{2},\ldots,A_{n}$ be in $K(H)$. Then
\begin{eqnarray*}
\sqrt{2}s_{j}\left(\oplus_{i=1}^{n}(\Re(A_{i})+\Im(A_{i}))\right)&\leq& s_{j}(\oplus_{i=1}^{n}(A_{i}+iA_{i}^{*}))\\
&\leq&
2s_{j}(\oplus_{i=1}^{n}(\Re(A_{i})+\Im(A_{i}))),
\end{eqnarray*}
 for
$j=1,2,\ldots$.
\end{theorem}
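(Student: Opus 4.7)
The plan is to reduce the statement to Theorem \ref{normal} by observing that $B_i := A_i + iA_i^{*}$ is a normal operator whose Cartesian components are particularly simple. First, I would verify normality: computing $B_iB_i^{*} = (A_i+iA_i^{*})(A_i^{*}-iA_i)$ and $B_i^{*}B_i = (A_i^{*}-iA_i)(A_i+iA_i^{*})$, the cross terms $-iA_i^2 + i(A_i^{*})^2$ appear in both products, and the remaining terms rearrange to $A_iA_i^{*} + A_i^{*}A_i$ in either order, so $B_iB_i^{*} = B_i^{*}B_i$. Thus each $B_i$ is normal and so is $\oplus_{i=1}^{n} B_i$.

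Next I would compute the real and imaginary parts of $B_i$. Writing $A_i = H_i + iK_i$ with $H_i = \Re(A_i)$ and $K_i = \Im(A_i)$, a short calculation gives
\[
\Re(B_i)=\frac{(1-i)A_i+(1+i)A_i^{*}}{2}=H_i+K_i,\qquad \Im(B_i)=\frac{(1+i)A_i-(1-i)A_i^{*}}{2i}=H_i+K_i.
\]
So $\Re(B_i)=\Im(B_i)=\Re(A_i)+\Im(A_i)$, and therefore $\Re(B_i)+\Im(B_i)=2(\Re(A_i)+\Im(A_i))$ and $|\Re(B_i)|+|\Im(B_i)|=2|\Re(A_i)+\Im(A_i)|$.

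With these identities in hand, I would apply Theorem \ref{normal} directly to the normal operators $B_1,\dots,B_n$. The lower bound there reads
\[
\frac{1}{\sqrt 2}\,s_{j}\!\left(\oplus_{i=1}^{n}(\Re(B_i)+\Im(B_i))\right)\le s_{j}(\oplus_{i=1}^{n}B_i),
\]
which, after substituting $\Re(B_i)+\Im(B_i)=2(\Re(A_i)+\Im(A_i))$ and pulling out the scalar $2$, becomes exactly $\sqrt{2}\,s_{j}(\oplus_{i=1}^{n}(\Re(A_i)+\Im(A_i)))\le s_{j}(\oplus_{i=1}^{n}(A_i+iA_i^{*}))$. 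The upper bound in Theorem \ref{normal} gives
\[
s_{j}(\oplus_{i=1}^{n}B_i)\le s_{j}\!\left(\oplus_{i=1}^{n}(|\Re(B_i)|+|\Im(B_i)|)\right)=s_{j}\!\left(\oplus_{i=1}^{n}2|\Re(A_i)+\Im(A_i)|\right),
\]
and since $s_{j}(|T|)=s_{j}(T)$ for any compact $T$ and this identity passes through direct sums, the right-hand side equals $2\,s_{j}(\oplus_{i=1}^{n}(\Re(A_i)+\Im(A_i)))$, completing the proof.

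The only genuine obstacle is bookkeeping: one must keep straight the two uses of Cartesian decomposition (for $A_i$ and for $B_i$) and be sure to invoke $s_{j}(|T|)=s_{j}(T)$ to replace $|\Re(A_i)+\Im(A_i)|$ by $\Re(A_i)+\Im(A_i)$ in the final upper bound. As a sanity check, note that one in fact has $A_i+iA_i^{*}=(1+i)(\Re(A_i)+\Im(A_i))$, so the lower inequality is actually an equality, while the upper inequality is loose by a factor of $\sqrt 2$; this is consistent with the slack already present in the upper bound of Theorem \ref{normal}.
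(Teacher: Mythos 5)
Your proof is correct and follows essentially the same route as the paper: establish that $A_i+iA_i^{*}$ is normal, compute that its real and imaginary parts coincide and both equal $\Re(A_i)+\Im(A_i)$, and then invoke Theorem \ref{normal}. Your closing observation that $A_i+iA_i^{*}=(1+i)(\Re(A_i)+\Im(A_i))$, so the lower inequality is actually an equality, is a nice extra not made explicit in the paper.
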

\begin{proof}
Note that $A_{i}+iA^{*}_{i}$ is normal operator for $i=1,\ldots,n$,
so $T=\oplus_{i=1}^{n}(A_{i}+iA_{i}^{*})$ is normal. On the other
hand, we can write $T=\Re(T)+i\Im(T)$ where
$$\Re(T)=(\oplus_{i=1}^{n}(A_{i}+A_{i}^{*})+i\oplus_{i=1}^{n}(A_{i}^{*}-A_{i}))/2,$$
$$\Im(T)=(\oplus_{i=1}^{n}(A_{i}-A_{i}^{*})+i\oplus_{i=1}^{n}(A_{i}^{*}+A_{i}))/2i.$$
It is enough to compare $\Re(T)$ and $\Im(T)$ to see
$\Re(T)=\Im(T)$. So
\begin{equation}\label{mosavi}
\Re(T)+\Im(T)=\oplus_{i=1}^{n}(A_{i}+A_{i}^{*})+i\oplus_{i=1}^{n}(A_{i}^{*}-A_{i}).
\end{equation}
Now apply Theorem \ref{normal}, we have
\begin{eqnarray}
(1/\sqrt{2})s_{j}(\Re(T)+\Im(T))&\leq& s_{j}(\Re(T)+i\Im(T))\nonumber\\
&\leq&
s_{j}(|\Re(T)|+|\Im(T)|),\label{111}
\end{eqnarray}
for $j=1,2,\ldots$. Put (\ref{mosavi}),
$\Re(T)+i\Im(T)=\oplus_{i=1}^{n}(A_{i}+iA_{i}^{*})$ and $\Re(T)$ in
(\ref{111}) to obtain
\begin{equation}
(1/\sqrt{2})s_{j}(\oplus_{i=1}^{n}(A_{i}+A_{i}^{*})+i\oplus_{i=1}^{n}(A_{i}^{*}-A_{i}))\leq
s_{j}(\oplus_{i=1}^{n}(A_{i}+iA_{i}^{*})),
\end{equation}
and
\begin{eqnarray*}
s_{j}(\oplus_{i=1}^{n}(A_{i}+iA_{i}^{*})) &\leq
2s_{j}(\oplus_{i=1}^{n}(A_{i}+A_{i}^{*})/2
+i\oplus_{i=1}^{n}(A_{i}^{*}-A_{i})/2)\\
&=s_{j}(\oplus_{i=1}^{n}(A_{i}+A_{i}^{*})+i\oplus_{i=1}^{n}(A_{i}^{*}-A_{i})),
\end{eqnarray*}
for $j=1,2,\ldots$.  By writing
$\Re(\oplus_{i=1}^{n}A_{i})=\oplus_{i=1}^{n}(A_{i}+A_{i}^{*})/2$ and
$\Im(\oplus_{i=1}^{n}A_{i})=\oplus_{i=1}^{n}(A_{i}-A_{i}^{*})/2i$ we
have
\begin{eqnarray*}
(1/\sqrt{2})s_{j}(2\Re(\oplus_{i=1}^{n}A_{i})+2\Im(\oplus_{i=1}^{n}A_{i}))&\leq&
s_{j}(\oplus_{i=1}^{n}(A_{i}+iA_{i}^{*}))\\
&\leq&
s_{j}(2\Re(\oplus_{i=1}^{n}A_{i})+2\Im(\oplus_{i=1}^{n}A_{i})),
\end{eqnarray*}
 for
$j=1,2,\ldots$. Finally
\begin{eqnarray*}
\sqrt{2}s_{j}\left(\oplus_{i=1}^{n}(\Re(A_{i})+\Im(A_{i}))\right)&\leq& s_{j}(\oplus_{i=1}^{n}(A_{i}+iA_{i}^{*}))\\
&\leq&
2s_{j}(\oplus_{i=1}^{n}(\Re(A_{i})+\Im(A_{i}))),
\end{eqnarray*}
 for
$j=1,2,\ldots$.
\end{proof}


\begin{thebibliography}{99}


\bibitem{aud}
         W. Audeh and F. Kittaneh,
       Singular value inequalities for compact operators,
       Linear Algebra Appl.  437 (2012) 2516-2522.
\bibitem{bhabook}
        R. Bhatia,
       Matrix Analysis,
        Springer-Verlag, New York, 1997.

\bibitem{bha3}
 R. Bhatia and F. Kittaneh,  On the singular values of a
product of operators, SIAM J. Matrix Anal. Appl. 11 (1990)
272-277.
\bibitem{bhabook2}
R. Bhatia,
        Positive Definite Matrices,
       Princeton Press, 2007.

\bibitem{bha1}
       R. Bhatia and F. Kittaneh,
       The matrix arithmetic-geometric mean
inequality revisited,
       Linear Algebra Appl. 428 (2008) 2177-2191.



\bibitem{hir}
         O. Hirzallah and F. Kittaneh,
         Inequalities for sums and direct sums of Hilbert space operators,
       Linear Algebra Appl. 424 (2007) 71-82.

\bibitem{kit}
         F. Kittaneh,
       Notes on some inequalities for Hilbert space operators,
       Publ. RIMS Kyoto Univ. 24 (1988) 283-293.



\bibitem{tao}
        Y. Tao,
         More results on singular value inequalities of matrices,
       Linear Algebra Appl. 416 (2006) 724--729.


\bibitem{zhan}
         X. Zhan,
        Matrix Inequalities, Springer-Verlag, Berlin, 2002.
        
\end{thebibliography}
\end{document}